\documentclass[11pt, a4paper]{article} 

\title{Ruin probabilities for risk processes \\in a bipartite network}

\author{Anita Behme\thanks{Technische Universit\"at Dresden,
Institut f\"ur Mathematische Stochastik, 01062 Dresden, Germany, email: anita.behme@tu-dresden.de}\,\,, Claudia Kl\"uppelberg\thanks{Center for Mathematical Sciences, Technische Universit\"at M\"unchen,  85748 Garching, Boltzmannstrasse 3, Germany, email: cklu@tum.de}\,\,,  Gesine Reinert\thanks{Department of Statistics, University of Oxford, 24-29 St Giles, Oxford OX1 3LB, UK, email: reinert@stats.ox.ac.uk }
}

\usepackage{amssymb}
\usepackage{amsfonts}
\usepackage{amsmath}
\usepackage{amsthm}
\usepackage{comment} 
\usepackage{graphicx}
\usepackage[usenames, dvipsnames]{xcolor}
\usepackage{verbatim}
\usepackage{dsfont}
\usepackage{color}
\usepackage[numbers]{natbib}
\usepackage{relsize}
\usepackage{lmodern}
\usepackage{url}
\usepackage{MnSymbol}
\usepackage[english]{babel}
 \usepackage{dsfont}  
\usepackage{tikz}
\usetikzlibrary{arrows}
\usepackage{pgfplots}
\usepackage[titletoc]{appendix}

\textwidth16cm
\textheight24cm
\topmargin-2cm
\oddsidemargin0cm
\evensidemargin0cm
\frenchspacing

\numberwithin{equation}{section}

\usepackage{caption} 
\captionsetup{font={small}}
\captionsetup[table]{position=top}

\newtheorem{theorem}{Theorem}[section]
\newtheorem{lemma}[theorem]{Lemma}
\newtheorem{remark}[theorem]{Remark}
\newtheorem{remarks}[theorem]{Remarks}
\newtheorem{example}[theorem]{Example}
\newtheorem{proposition}[theorem]{Proposition}
\newtheorem{definition}[theorem]{Definition}

\newtheorem{corollary}[theorem]{Corollary}
\newtheorem{fig}[theorem]{Figure}

\newcommand{\bthe}{\begin{theorem}}
\newcommand{\ethe}{\end{theorem}}

\newcommand{\ben}{\begin{enumerate}}
\newcommand{\een}{\end{enumerate}}

\newcommand{\bit}{\begin{itemize}}
\newcommand{\eit}{\end{itemize}}

\newcommand{\beq}{\begin{equation}}
\newcommand{\eeq}{\end{equation}}

\newcommand{\ble}{\begin{lemma}}
\newcommand{\ele}{\end{lemma}}

\newcommand{\bde}{\begin{definition}\rm}
\newcommand{\ede}{\halmos\end{definition}}

\newcommand{\bco}{\begin{corollary}}
\newcommand{\eco}{\end{corollary}}

\newcommand{\bpr}{\begin{proposition}}
\newcommand{\epr}{\end{proposition}}

\newcommand{\brem}{\begin{remark}\rm}
\newcommand{\erem}{\halmos\end{remark}}

\newcommand{\bproof}{\begin{proof}}
\newcommand{\eproof}{\end{proof}}

\newcommand{\bexam}{\begin{example}\rm}
\newcommand{\eexam}{\halmos\end{example}}

\newcommand{\bfi}{\begin{fig}}
\newcommand{\efi}{\end{fig}}

\newcommand{\btab}{\begin{tab}}
\newcommand{\etab}{\end{tab}}

\newcommand{\beao}{\begin{eqnarray*}}
\newcommand{\eeao}{\end{eqnarray*}\noindent}

\newcommand{\beam}{\begin{eqnarray}}
\newcommand{\eeam}{\end{eqnarray}\noindent}

\newcommand{\barr}{\begin{array}}
\newcommand{\earr}{\end{array}}

\newcommand{\bdis}{\begin{displaymath}}
\newcommand{\edis}{\end{displaymath}\noindent}

\def\N{{\mathbb N}}

\def\P{{\mathbb P}}
\def\E{{\mathbb E}}
\def\R{{\mathbb R}}

\def\P{\mathbb{P}}

\def\A{\mathbb{A}}

\def\cals_+{{\cals_+}}
\def\cala{{\mathcal{A}}}

\def\cals{{\mathcal{S}}}

\def\calo{{\mathcal{O}}}

\def\bone{{\mathds 1}}

\newcommand{\la}{{\lambda}}

\newcommand{\var}{{\rm Var}}

\newcommand{\PK}{Pollaczek-Khintchine}

\newcommand{\ov}{\overline}

\newcommand{\Mid}{\,\Big|\,}

\newcommand{\halmos}{\quad\hfill\mbox{$\Box$}}

\def\P{{\bf {\mathbb{P}}}}

\newcommand{\bfa}{\mathbf{a}}

\allowdisplaybreaks[4]

\begin{document}


\maketitle

\begin{abstract}
This paper studies risk balancing features in an insurance market by evaluating ruin probabilities for single and multiple components of a multivariate compound Poisson risk process.
The dependence of the components of the process is induced by a random bipartite network.
In analogy with the non-network scenario, a network ruin parameter is introduced. This random parameter, which depends on the bipartite network, is crucial for the ruin probabilities. Under certain conditions on the network and for  light-tailed claim size distributions we obtain Lundberg bounds and, for exponential claim size distributions, exact results for the ruin probabilities. For large sparse networks, the network ruin parameter is approximated by a function of independent Poisson variables. 
\end{abstract}

\noindent
{\em AMS 2010 Subject Classifications:} \, primary:\,\,\, 
60G51, 
91B30; 
secondary: \,\,\,94C15. 

\noindent
{\em Keywords:}
bipartite network, Cram\'er-Lundberg model, exponential claim size distribution, hitting probability, multivariate compound Poisson process, ruin theory, Poisson approximation, Pollaczek-Khintchine formula, risk balancing network.

\section{Introduction}\label{s1}

Consider an {{\em insurance risk process} in the celebrated Cram\'er-Lundberg model with Poisson claim arrivals, premium rate $c$ and claim sizes $X_k$}, that is, a spectrally positive compound Poisson process $R=(R(t))_{t\ge0}$ given by
$$R(t) = \sum_{k=1}^{N(t)} X_k -ct,\quad t\ge 0,$$
where $R(0)=0$, $c>0$ is a constant, $X_k>0$, $k\in \N$, are i.i.d. random variables with distribution $F$ and finite mean $\mu=\E[X_k]$, and $(N(t))_{t\ge0}$ is a Poisson process with intensity $\lambda>0$. For such a process, the ruin probability for a given risk reserve $u> 0$ is denoted by
$\Psi(u) = \P(R(t) \geq u \text{ for some } t>0)$
and it is given by the famous Pollaczek-Khintchine formula (cf. \cite[VIII (5.5)]{As}, \cite[IV (2.2)]{AsAl} or \cite[Eq. (1.10)]{EKM1997})
\begin{eqnarray}\label{PK}
\Psi(u) = 1-(1-\rho)\sum_{n=0}^\infty \rho^n F_I^{n*}(u)=  (1-\rho)\sum_{n=1}^\infty \rho^n  \ov{ F_I^{n\ast}}(u), \quad u \ge 0,
\end{eqnarray}
whenever the \emph{ruin parameter} $\rho =\la\mu/c$ satisfies $\rho<1$.  
 Hereby, for every distribution function $G$ with $G(0)=0$, for $x\ge0$, we denote the corresponding tail by $\ov G(x) = 1-G(x)$, the {\em integrated tail distribution function} by $G_I(x) = \frac1{\nu}\int_0^x \ov G(y) dy$ if the mean $\nu=\int_0^\infty xdG(x)$ is finite, and the $n$-fold convolution by 
  $G^{n\ast}$, where $G^{0\ast} (x):= \bone\{x\ge 0\}$ and  $G^{(n+1)\ast} (x)  = \int_0^x G^{n\ast}(x-u) dG(u)$, $n \ge 0$.
  
We also recall that, whenever $\rho\ge 1$, then $\Psi(u)=1$ for all $u>0$. Note that the function $\Psi$ of the Pollaczek-Khintchine formula \eqref{PK} is a compound geometric distribution tail with parameter $\rho$. Thus the smaller $\rho$, the smaller the ruin probability, 
and for $\rho<1$ the ruin probability $\Psi(u)$ tends to $0$ as $u\to\infty$.  More precisely, it is well known that, when the distribution function $F$ is \emph{light-tailed} in the sense that an {\em adjustment coefficient} $\kappa$ exists; i.e.,
\begin{equation}\label{lucoeff} 
\exists \kappa>0: \quad  \int_0^\infty e^{\kappa z} dF_I(z) = \frac1{\rho},
\end{equation}
then the ruin probability $\Psi(u)$ satisfies the famous Cram\'er-Lundberg inequality (cf. \cite[Eq. XIII (5.2)]{As}, \cite[Eq. I.(4.7)]{AsAl} or \cite[Eq. (1.14)]{EKM1997})
\begin{equation}\label{classicLundbergbound} 
\Psi(u)\leq e^{-\kappa u} \quad \text{for all}\quad u> 0.
\end{equation}

It is easy to see that $\rho<1$ is a necessary condition for the existence of an adjustment coefficient. 
Further, if the $X_k$ are exponentially distributed with mean $\mu$, then $\kappa=(1-\rho)/\mu$ and
\begin{equation}\label{exporuin}
\Psi(u) = \rho e^{-u(1-\rho)/\mu} \quad \text{for all}\quad u> 0.
\end{equation}
In this paper we derive multivariate analogues to the above classic results in a network setting.
More precisely we consider a multivariate compound Poisson process whose dependency structure stems from a random {bipartite} network which is described in detail in Section~\ref{s2} below. 
We investigate the influence to the insurance market of sharing exogeneous losses modelled by the network.
Insurance companies or business lines of one insurance company are the agents in the bipartite network of Figure~\ref{fig1},
and the portfolio losses, which are the objects, are shared either by different companies or assigned to different business lines within a company. This can also yield useful scenarios for the risk assessment of risk regulators or scenarios of the competitors of a company, when the underlying selection strategy of the agents is unknown.

Our results assess the effect of a network structure on the ruin probability in a Cram\'er-Lundberg setting.
We show that the dependence in the network structure plays a fundamental role for the ruin probability; i.e., the risk within the reinsurance market or within a company.
The ruin parameter $\rho$, which in itself serves as a risk measure, becomes random and its properties depend on the random bipartite network as well as on the characteristics of the claim amount processes. While the network adjacency matrix describes the random selection process of the agents (given by edge indicators), the weights describe how the agents divide the losses among each other. 

As a prominent network model, we single out the mixed Binomial network model with conditionally independent Bernoulli edge indicators as defined in Section~\ref{s2}. This includes the (deterministic) complete network where all agents are linked to all objects and vice versa as a special case. We also provide scenarios for the division of the losses. In the network with homogeneous weights,
 every claim size is equally shared by all agents that are connected to it. The exponential system uses weights which depend on the expected object losses. Notably, for exponentially distributed object claims the 
 exponential system yields an explicit formula for the ruin probability in the network.

Our framework is related to the two-dimensional setting in \cite{AvramPalmPist2,  AvramPalmPist1} where it is assumed that two companies divide claims among each other in some prespecified proportions. The main novelty of our setting is that we consider a network of interwoven companies, with emphasis on studying the effects which occur through this random network dependence structure. Our bipartite  network model has already been used in \cite{KKR1, KKR2} to assess quantile-based risk measures for systemic risk.

Our results extend those for multivariate models to a random network situation.
Whereas one-dimensional insurance risk processes have been extensively studied since Cram\'er's introduction in the 1930s, results for multivariate models (beyond bivariate) are scattered in the literature; for a summary of results see \cite[Ch.~XIII(9)]{AsAl}. 
In general dimensions, multivariate ruin is studied e.g. in \cite{Bregman, Ramasubramannian}, where dependency between the risk processes is modeled by a Clayton dependence structure in terms of a L\'evy copula, which allows for scenarios reaching from weak to strong dependence.
Further, in \cite{Collamore1, pan2017exact}, using large deviations methods, multivariate risk processes are treated and so-called ruin regions are studied, that is, sets in $\mathbb{R}^d$ which are hit by the risk process with small probability. 
In contrast, in our setting claims are partitioned and assigned randomly.

The paper is structured as follows. In Section~\ref{s2} we describe the bipartite network model and present two loss sharing schemes that are characterized by homogeneous or proportional weights.
We focus on three ruin situations, namely the ruin of a single agent, the ruin of a risk balanced set of agents, and the joint ruin of all agents.
Section~\ref{s3} derives results for the ruin probabilities of sums of components of the multivariate compound Poisson process with special emphasis on the network influence. 
Here we derive a network Pollaczek-Khintchine formula for component sums and a network Lundberg bound.
In Section~4 we present explicit results for an exponential system.
In Section~5 we investigate the bipartite network with  conditionally independent edges, and provide a Poisson approximation for the ruin parameter $P$ for arbitrary sets of agents. We specialize such networks to the mixed Binomial network and the complete network.
Section~\ref{s4} is dedicated to the joint ruin probability of all agents in a selected group and provides a network Lundberg bound for this ruin event. The proofs are found in Section~\ref{proof}.

\section{The bipartite network model}\label{s2}

Let $V=(V_1,\ldots,V_d)^\top$ be a $d$-dimensional spectrally positive compound Poisson process with independent components given by
$$V_j(t)=\sum_{k=1}^{N_j(t)}X_j(k) - c_j t, \quad t\ge 0,$$
such that for all $j=1,\ldots, d$ the claim sizes $X_j(k)$ are positive i.i.d. random variables having mean $\mu_j<\infty$ and distribution function $F_j$.  Moreover $N_j=(N_j(t))_{t\ge0}$ is a Poisson process with intensity $\lambda_j>0$, and the premium rate $c_j>0$ is constant. 
The corresponding deterministic constant $\rho_j:=\lambda_j \mu_j/c_j$ as in the Pollaczek-Khintchine formula \eqref{PK} is called {\em ruin parameter of component $j$}.

Further we introduce a random bipartite network, independent of the multivariate compound Poisson process $V$,  that consists of $q$ {\em agents} $\mathcal{A}^i$, $i=1, \ldots, q$, and $d$ {\sl objects} $\mathcal{O}_j$, $j=1, \ldots, d$, and edges between agents and objects as visualized in Figure~\ref{fig1}.

\begin{figure}[t]
	\begin{center}
		\begin{tikzpicture}[-,every node/.style={circle,draw},line width=1pt, node distance=2cm]
		\hspace*{-1cm}
		\node (1)  {$\cala_1$};
		\node (2) [right of=1] {$\cala_2$};
		\node (3) [right of=2] {$\cala_3$};
		\node (4) [right of=3] {$\cala_4$};
		\node (5) [right of=4] {$\cala_5$};
		\node (6) [right of=5] {$\cala_6$};
		\node (8) [below of=2,node distance=2cm] {$\calo_1$};
		\node (9) [right of=8] {$\calo_2$};
		\node (10) [right of=9] {$\calo_3$};
		\node (11) [right of=10] {$\calo_4$};
		\foreach \from/\to in {1/8,1/9,2/8,2/10,3/9,4/8,4/10,4/11,5/10,5/11,6/11}
		\draw (\from) -- (\to);
		\end{tikzpicture}
	\end{center}
	\caption{\label{fig1} A bipartite network with 6 agents and 4 objects. It strongly resembles the depiction of the reinsurance market in Figure~21 of \cite{IAIS}.	}
\end{figure}
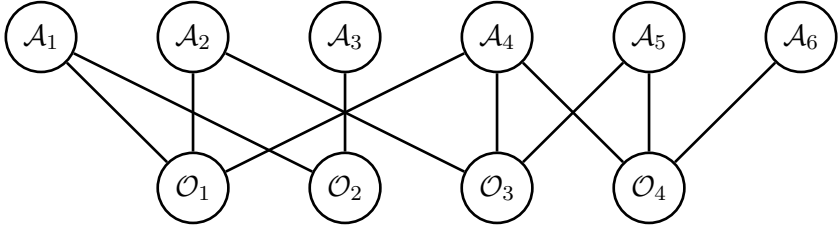

The random edge indicator variables are $\bone \{ i \sim j \}$ indicating whether or not there is an edge between agent $i$ and object $j$. Here and in the following the variable $i$ stands for an agent in $\mathcal{A}$, and the variable $j$ stands for an object in $\mathcal{O}$. 

The weighted edges
 are encoded in a {\em weighted adjacency matrix} 
\beam\label{A}
A=\big(A^i_j\big)_{i=1,\ldots,q \atop j=1,\ldots,d}\quad \text{where} 
\quad A^i_j = \bone \{ i \sim j \} W_j^i
\eeam
for random variables $W_j^i$, which may depend on the random network and have values in $[0,1]$  such that  
\begin{equation}
\label{bedweights}
 0 \le \sum_{i=1}^q A^i_j \le 1  \quad \text{for all} \quad j=1, \ldots, d.
\end{equation}
The edge indicator variables $\bone\{i\sim j\}$ and weights $W^i_j$ may depend on each other but are assumed to be independent of the process $V$.
We use the degree notation
\begin{equation*}\label{sum1}
\deg(i)=\sum_{j=1}^d \bone\{i\sim j \}\quad \text{and}\quad \deg(j)= \sum_{i=1}^q \bone\{i\sim j \}
\end{equation*}
for all $i=1, \ldots, q$, $j=1, \ldots, d$. For $Q\subseteq \{1,\ldots, q\}$ and $j\in\{1,\dots,d\}$ we
abbreviate 
$$\deg(Q)=\sum_{i\in Q} \deg(i)\quad\mbox{and}\quad \mathds{1}\{Q\sim j\}:=\max_{i\in Q}\{\mathds{1}\{i\sim j\}\}.$$
We denote by $\A$  the set of all possible realizations  
$\bfa = (a_j^i)_{i=1,\ldots,q \atop j=1,\ldots,d}$
of the weighted adjacency matrix $A$ from \eqref{A}.

Every object of the bipartite network is assigned to the corresponding component of the compound Poisson process $V$.
Every agent is then assigned to a resulting compound Poisson process, its portfolio, given by
$$R^i(t):=\sum_{j=1}^d A^i_{j}V_j(t),\quad t\ge 0.$$
In total, this yields a $q$-dimensional process $R=  (R^1,\ldots,R^q)^\top$ of all agents given by  
\beam\label{eq-R}
R(t) = A V(t),\quad t\ge 0, 
\eeam
with $V= (V_1,\ldots,V_d)^\top$ as defined above. 
Hence the components of $R$ are no longer independent.

\begin{remarks}\rm
	(i) The independence assumption on the components of $V$ entails that claims in different components never happen at the same time. This is no  mathematical restriction of the model, since we can disentangle dependence through the introduction of additional objects. For example, we can always write two dependent compound Poisson processes $V_1,V_2$ as $V_1=W_1+W_3$ and $V_2=W_2+W_3$, where $W_1$ and $W_2$ have claims only in $V_1$ and $V_2$, respectively, and $W_3$ is the process of the joint claims. Then $W_1, W_2, W_3$ are independent. Thus, mathematically, a third object, 3, is introduced, and objects 1 and 2 are altered. There is a caveat in that this procedure introduces preconditions on the network structure: The resulting edge indicator variables to the new objects 1 and 3 will not be independent as $\bone\{i\sim 1\}=1$ implies $\bone\{i\sim 3\}=1$ for any $i$, and the same holds for edges to objects 2 and 3.  \\[2mm]
	(ii) We can easily extend this model to multiple layers, where e.g. the agents are connected to a set of super-agents via another bipartite network that is encoded in a second weighted adjacency matrix $B$. 
	The resulting process on the top layer is simply obtained by matrix multiplication in \eqref{eq-R}, resulting in $R=BAV$, which reduces the problem to the form \eqref{eq-R}.
\end{remarks}

While many general results in this paper do not require independence of the edge indicator variables in the bipartite network, some of our examples will assume that the edges are conditionally independent, given the value
	of a random variable $\Theta$ which is assumed for convenience to take values in $[0,1]$. One could
	think of $\Theta$ as a hidden variable such as an economic indicator or an environmental variable which governs the behaviour of all agents. Given a realisation $\Theta=\theta$ we then use the notation $p_{i,j}(\theta):= \P ( i \sim j):=\P(\bone\{i\sim j\}=1) \in [0,1]$. The following random bipartite network is of particular interest: 
\begin{itemize} 
	\item The \emph{mixed Binomial network, where $\bone\{i\sim j\}$ are conditionally independent Bernoulli random variables with random parameter $\Theta\in [0,1]$. In case of a degenerate variable $\Theta=p$ a.s., we call the resulting model a \emph{Bernoulli network}, where $\bone\{i\sim j\}$ are independent Bernoulli random variables with parameter $p$. For $\Theta=1$ a.s. we obtain
	the \emph{complete network}, where $\bone\{i\sim j\}\equiv 1$, that is, all agents are linked to all objects and vice versa. }
\end{itemize}

We also single out two specific models for the weights of the weighted adjacency matrix \eqref{A}, which play a prominent role for the network ruin probability. In both examples, the randomness in $A$ arises solely from the randomness of the network; given the network, the weights $W^i_j$ will be deterministic. Still, our general results apply to any random $W^i_j$  as long as  the resulting matrix $A$ is independent of the compound Poisson process $V$ and \eqref{bedweights} holds.
\begin{itemize}
\item
A natural choice for $A_j^i$ is given by the {\em homogeneous weights}
\begin{equation}\label{homo}
A_j^i=\frac{\bone\{i\sim j \}}{\deg(j)}, 
\quad \text{where $\frac{0}{0}$ is interpreted as $0$};
\end{equation}
i.e., every object is equally shared by all agents that are connected to it.
   \item 
A leading example in our paper (see Section \ref{s31}) extends the one-dimensional  precise ruin probability  \eqref{exporuin} for exponentially distributed claims to the network setting. It relies on \emph{proportional weights} defined as follows. Fix $Q\subseteq\{1,\ldots,q\}$ and set for every agent $i\in Q$
\begin{equation}\label{AlaQ}
W^i_j =  W_j^Q =  \frac{\bone\{Q\sim j\} r^Q}{\sum_{k\in Q}\bone\{k\sim j\} \mu_j}, \quad \text{where $\frac{0}{0}$ is interpreted as $0$},
\end{equation}
with some constant $r^Q>0$.  Here $r^Q$ is chosen such that  
$$\sum_{i=1}^q A_j^i= r^Q \frac{ \bone\{Q\sim j\} }{\mu_j} \frac{\sum_{i=1}^q\bone\{i\sim j\} }{\sum_{i\in Q}\bone\{i\sim j\}} 
\leq 1, \quad \text{for all }j=1,\ldots, d,$$   
and it can be viewed as the proneness of group $Q$ to link to objects.
The resulting random weighted adjacency matrix encodes that the exposure of agent group $Q$ to object $j$ is inversely proportional to the expected claim size of the process associated to that object, while for a fixed object $j$ with mean claim  size $\mu_j$, all $i\in Q$ which link to this object share it in equal proportion.
\end{itemize}

We consider the ruin probability of the sum of a non-empty selected subset $Q\subseteq\{1,\dots,q\}$ of all agents and the probability that these agents face ruin (an \emph{and}-condition), that is 
\begin{align}
\Psi^Q(u)& :=\P\Big(\sum_{i\in Q} (R^i(t)-u^i) \geq 0 \text{ for some } t\geq 0\Big),\\
\Psi^Q_\wedge(u)& :=
\P\Big(\min_{i\in Q} (R^i(t)-u^i) \geq 0   \text{ for some } t\geq 0\Big),\label{jointhit2}
\end{align}
for $u\in[0,\infty)^q$ such that $\sum_{i \in Q}u^i\neq 0$.

If $Q=\{1,\ldots, q\}$ we simply denote $\Psi:=\Psi^Q$, while for $Q=\{i\}$ for $i\in\{1,\ldots,q\}$ we write  $\Psi^i=\Psi^{\{i\}}$. Similarly we write $\Psi^{\{i\}}_\wedge= \Psi^{\{i\}}=\Psi^i$ for $i=1,\ldots,q.$ Note that for every $Q'\subseteq Q\subseteq \{1,\ldots, q\}$,
$$\Psi^{Q}_\wedge\leq \Psi^{Q'}.$$ 

\section{The ruin probability of aggregated risk processes in the network}\label{s3}

We start with $\Psi^Q$ for $Q\subseteq\{1,\dots,q\}$, the ruin probability of a set of agents or the total risk of these agents. 
We will derive two main results for the bipartite network.
First, we generalize the Pollaczek-Khintchine formula of \eqref{PK} and, second, the Lundberg inequality \eqref{classicLundbergbound}.
The proofs rely on the independence of the risk processes and the network and are obtained by conditioning on the network, carefully taking the network properties into account. We postpone them to Section~\ref{proof}.

\subsection{The network Pollaczek-Khinchine formula for component sums}

\begin{theorem}\label{prop_psisum} \rm [Network Pollaczek-Khintchine formula for component sums] \\
For any $Q\subseteq \{1,\ldots, q\}$ the joint ruin probability
\begin{equation*} \label{psisumpartial} \Psi^Q (u) =\P\Big(\sum_{i\in Q} (R^i(t)-u^i) \geq 0 \text{ for some } t\geq 0\Big) \end{equation*} for a given risk reserve $u\in [0,\infty)^q$ such that $\sum_{i \in Q}u^i> 0$ has representation
\begin{align}\Psi^Q (u) & = \P(P^Q<1) \E \Big[ (1- P^Q) \sum_{n=1}^\infty (P^Q)^{n} \ov{(F^Q_I)^{n\ast}}(\sum_{i\in Q} u^i) \Mid  P^Q<1 \Big]  + \P(P^Q\geq 1), \label{ruinsumpartial}
 \end{align}
 where $P^Q$ is defined in \eqref{networkrho} and 
\begin{align}\label{FiQ}
F_I^Q(x, A):= F_I^Q(x)&= \Big(\sum_{j=1}^d \Big(\sum_{i\in Q} A^i_j\Big) \lambda_j \mu_j\Big)^{-1} \sum_{j=1}^d \lambda_j \mathds{1}\{Q\sim j\} \int_0^x \ov{F}_j\Big(\frac{y}{\sum_{i\in Q}  A^i_j}\Big)dy,\quad x\ge0,
\end{align}
is a random integrated tail function depending on the matrix $A$, taking values in the set of cumulative distribution functions on non-negative real numbers.
\halmos
\end{theorem}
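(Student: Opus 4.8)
The plan is to condition on the bipartite network and thereby reduce the multivariate hitting problem to a one-dimensional one, to which the classical Pollaczek-Khintchine formula \eqref{PK} applies directly. First I would invoke the standing assumption that the network $A$ is independent of $V$ and does not change over time, so that I may argue on the event $\{A = \bfa\}$ for a fixed realisation $\bfa \in \A$ and average at the end. Writing $B_j := \sum_{i \in Q} A^i_j$ for the aggregated weight of object $j$ (a constant given $\bfa$), the relevant quantity is the aggregated process
$$S^Q(t) := \sum_{i \in Q} R^i(t) = \sum_{j=1}^d \Big(\sum_{i \in Q} A^i_j\Big) V_j(t) = \sum_{j=1}^d B_j V_j(t),$$
since the event $\sum_{i \in Q}(R^i(t) - u^i) \geq 0$ is equivalent to $S^Q(t) \geq \sum_{i \in Q} u^i =: u^Q$, with $u^Q > 0$ by hypothesis.

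The key step is to recognise that, given $\bfa$, the process $S^Q$ is again a spectrally positive compound Poisson process. Substituting the definition of $V_j$ gives
$$S^Q(t) = \sum_{j=1}^d \sum_{k=1}^{N_j(t)} B_j X_j(k) - \Big(\sum_{j=1}^d B_j c_j\Big) t,$$
so that the drift is $c^Q = \sum_{j=1}^d B_j c_j$, while the jump part is a superposition of the independent compound Poisson processes $t \mapsto \sum_k B_j X_j(k)$ with intensities $\lambda_j$, restricted to objects with $B_j > 0$, i.e.\ those with $\mathds{1}\{Q \sim j\} = 1$. Standard superposition of independent Poisson processes then shows that $S^Q$ is compound Poisson with total intensity $\Lambda^Q = \sum_{j=1}^d \lambda_j \mathds{1}\{Q \sim j\}$ and a generic jump equal to $B_j X_j$ drawn from object $j$ with probability $\lambda_j \mathds{1}\{Q \sim j\}/\Lambda^Q$; hence its jump tail is $\overline{F^Q}(x) = (\Lambda^Q)^{-1}\sum_{j=1}^d \lambda_j \mathds{1}\{Q \sim j\}\,\overline{F_j}(x/B_j)$ and its mean jump is $\mu^Q = (\Lambda^Q)^{-1}\sum_{j=1}^d B_j \lambda_j \mu_j$ (using $B_j \mathds{1}\{Q \sim j\} = B_j$, and finite since $B_j \le \sum_{i=1}^q A^i_j \le 1$ by \eqref{bedweights} with $\mu_j < \infty$). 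Computing the Pollaczek-Khintchine parameter $\Lambda^Q \mu^Q / c^Q$ of $S^Q$ then reproduces exactly the network parameter $P^Q$ of \eqref{networkrho}, while the integrated tail $(\mu^Q)^{-1}\int_0^x \overline{F^Q}(y)\,dy$ reproduces $F_I^Q$ of \eqref{FiQ}; both identities are direct substitutions.

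With this identification in hand I would apply \eqref{PK} to the one-dimensional process $S^Q$ at level $u^Q$. On the event $\{P^Q < 1\}$ this yields the conditional hitting probability $(1 - P^Q)\sum_{n=1}^\infty (P^Q)^n \overline{(F_I^Q)^{n\ast}}(u^Q)$, whereas on $\{P^Q \geq 1\}$ the recalled fact that $\Psi \equiv 1$ gives conditional probability $1$. Averaging over the network via the tower property, and splitting the expectation according to $\{P^Q < 1\}$ and $\{P^Q \geq 1\}$, produces precisely the two terms of \eqref{ruinsumpartial}. The degenerate case $\deg(Q) = 0$ needs no separate treatment: there $P^Q = 0$ by definition and the sum $\sum_{n=1}^\infty (P^Q)^n(\cdots)$ vanishes, consistently with $S^Q \equiv 0$ never reaching $u^Q > 0$.

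The only genuinely delicate point is the second step: verifying that the weighted superposition $\sum_{j=1}^d B_j V_j$ is again compound Poisson with the stated intensity and jump law, and that objects carrying no edge to $Q$ (where $B_j = 0$) drop out, so that the effective intensity is $\sum_{j=1}^d \lambda_j \mathds{1}\{Q \sim j\}$ rather than $\sum_{j=1}^d \lambda_j$. Once the parameters of $S^Q$ are matched to $P^Q$ and $F_I^Q$, the remainder is routine bookkeeping built on \eqref{PK} and the independence of the network from $V$.
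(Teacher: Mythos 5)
Your proposal is correct and takes essentially the same route as the paper's own proof: condition on the network realisation (using independence of $A$ and $V$), identify the aggregated process $\sum_{j}(\sum_{i\in Q}A^i_j)V_j$ as a one-dimensional compound Poisson process with intensity $\sum_j \lambda_j\mathds{1}\{Q\sim j\}$, mixture jump law and drift $\sum_j(\sum_{i\in Q}A^i_j)c_j$, apply the classical Pollaczek--Khintchine formula \eqref{PK}, handle $\deg(Q)=0$ separately, and average over the network. If anything, you make explicit two points the paper leaves implicit — the verification that the resulting ratio $\Lambda^Q\mu^Q/c^Q$ and integrated tail reproduce exactly $P^Q$ of \eqref{networkrho} and $F^Q_I$ of \eqref{FiQ}, and the treatment of the event $\{P^Q\geq 1\}$ via the recalled fact that the hitting probability is then identically $1$.
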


In the  network  the random variable, henceforth called the {\em (network) ruin parameter},
\begin{align}
P^Q &:= \frac{\sum_{j=1}^d (\sum_{i \in Q} A^i_j )\la_j \mu_j}{ \sum_{j=1}^d (\sum_{i \in Q} A^i_j ) c_j} \mathds{1}\{\deg(Q) > 0 \}\label{networkrho}\\
&= \sum_{j=1}^d \bone \{Q \sim j\}
\frac{\rho_j}{1+ \sum_{k\neq j}\bone\{Q \sim k\}\frac{\sum_{i\in Q} \bone\{i \sim k\} W^i_k}{ \sum_{i\in Q}\bone\{i \sim j\} W^i_j}\frac{c_k}{c_j}} 
\label{networkrho2}
\end{align}
is the random equivalent of $\rho$ in the classical Pollaczeck-Khinchine formula \eqref{PK}. Note that given $P^Q\geq 1$ as in the classical case we derive from \eqref{ruinsumpartial} that $\Psi^Q(u)=1$. While in the classical case, $\rho < 1$ is a cut-off for $\Psi(u)$ to trivially equal 1,  as $P^Q$ is random, in the network  a similar cut-off for the ruin probability for $P^Q<1$ or $P^Q\geq 1$ is not available.

\begin{remark}\rm \label{deg0notnecessary} If the weighted adjacency matrix $A$ is such that $\deg(Q) =0$, then $\sum_{i\in Q} A^i_j =0$ and $P^Q=\frac{0}{0}:=0$. Hence the indicator $ \mathds{1}\{\deg(Q) > 0 \}$ in \eqref{networkrho} is not mathematically necessary, however, we keep it for transparency. Example~\ref{ex-3.6} illustrates a case where the indicator features prominently. 
\end{remark} 

The following remark collects some general observations on $P^Q$.

\begin{remark}\rm
	(i) Given $\deg(Q)>0$ it holds that
	$$\min \{\rho_j,j=1,\ldots, d\} \leq P^Q\leq \max\{\rho_j,j=1,\ldots, d\}.$$
	Thus, if all objects have a ruin parameter ${\rho_j} <1$, then $P^Q < 1$. Nevertheless $P^Q < 1$ can be achieved even if some ruin parameters exceed 1, as long as the others balance this contribution.  \\[2mm]
	(ii) Eq.~\eqref{networkrho2} shows that the ruin parameter $P^Q$ depends on the weights $W_j^i $ only through ratios of sums of weights. 
	Eq.~\eqref{networkrho2} implies further that 
	$$ P^Q \le \sum_{j=1}^d \bone \{Q \sim j\} { \rho_j}.$$
	This bound is an equality when all agents $i\in Q$ are connected only to one single object. Otherwise the bound may be quite crude. Using the Markov inequality this bound can be used to bound $\P ( P^Q \ge t)$ for any $t >0$:
	$$\P ( P^Q \ge t) \le  \frac{1}{t} \E \Big[ \sum_{j=1}^d \bone \{Q \sim j\} { \rho_j} \Big]  = \frac{1}{t} \sum_{j=1}^d \P(Q\sim j)  \rho_j.$$ 
\end{remark}

\bexam\label{ex-3.6}[Equal ruin parameters]\\
If all $\rho_j=\rho$ are equal, we obtain directly from \eqref{networkrho} that for any set $Q\subseteq\{1,\ldots,q\}$
\begin{align*}
P^Q
&= \rho \mathds{1}\{\deg(Q)> 0\}
\end{align*}
and hence for any measurable function $f$ on $\R$
\begin{align*}
\E [f(P^Q)] 
&= f(\rho) \P(\deg(Q)> 0) + f(0) \P(\deg(Q)= 0).
\end{align*}
In particular,  $\E [P^Q ] <1$ if and only if 
$\rho < (\P(\deg(Q)> 0))^{-1}$. 
Comparing this condition to the condition $\rho < 1$ in the non-network case, we see that the presence of the network allows for $1 \le \rho < (\P(\deg(Q)> 0))^{-1}$. 
The network thus balances the ruin probabilities for single components in the sense that  $\rho>1$ is possible while still ensuring that $\E [P^Q] <1$. Similarly, if we interpret $P^Q$ as a risk measure, then 
	$\sum_{i\in Q} P^{\{i\}} = \rho \sum_{i\in Q} \bone\{\deg(i)>0\}$, which can be much larger than $P^Q=\rho \bone\{\deg(Q)>0\}$. 
\eexam

\bexam[Deterministic weights] \\
Let $W_j^i= \frac{r}{\la_j\mu_j}$ be independent of $i$ and $r> 0$ independent of $i$ and $j$, such that \eqref{bedweights} holds.
Then  
$$  P^Q
=  \sum_{j=1}^d  \sum_{i \in Q} \bone \{i \sim j\}\Big(\rho_j^{-1} \sum_{i \in Q} \bone \{i \sim j\} +\sum_{k\neq j} \rho_k^{-1}\sum_{i \in Q} \bone \{i \sim k\}  \Big)^{-1}. 
$$
The summands in the nominator show that the agents in $Q$ share the ruin parameters of all objects they are linked to in equal proportion.
A small $P^Q$ corresponds to a large denominator, hence, to small $\rho_k$'s. 
Consequently, the agents group $Q$ would favour risk processes with small ruin parameters.
\eexam

For illustration purposes we extract from Theorem \ref{prop_psisum} the ruin probability of a single agent in the network.
	
\bexam \label{thm:ruinprob}{\rm [Network Pollaczek-Khintchine formula for a single agent]}\\
	The ruin probability for a given risk reserve $u^i$ of $R^i$ for $i\in\{1, \ldots, q\}$ is given by
	\begin{align*}
	\Psi^i(u^i)
	&= \P( P^i<1) \E \Big[(1- P^i)  \sum_{n=1}^\infty (P^i)^{n} \ov{(F_I^i)^{n\ast}} (u^i) \Mid P^i<1 \Big]  +   \P(  P^i\geq 1)  , \quad u^i > 0,  
	\end{align*}
	where for $\deg(i)>0$, 
	$$ 	P^i:=P^{\{i\}}= \frac{\sum_{j=1}^d A^i_j \la_j \mu_j}{ \sum_{j=1}^d A^i_j c_j} = \sum_{j=1}^d \bone  \{i \sim j\} \frac{ \rho_j}{1+\sum_{k\neq j} \bone\{i \sim k\} \frac{ W^i_k}{ W^i_j } \frac{c_k}{c_j} }, $$ 
	and 
	\begin{equation*}
	F^i_I(x) := F^{\{i\}}_I(x)
	= \Big(\sum_{j=1}^d A^i_j \la_j \mu_j\Big)^{-1} \sum_{j=1}^d  \la_j \bone\{ A^i_j  \ne 0\} \int_0^x  \ov F_j\Big(\frac{y}{A^i_j}\Big)  dy ,\quad x\ge 0.
	\end{equation*}
\eexam

\begin{remark}\rm \label{remnonetwork}
	In Example~\ref{thm:ruinprob}, if the network is deterministic and fixed, then the formula for the ruin probability of a single agent reduces to the classical Pollaczek-Khintchine formula \eqref{PK}.
	\end{remark}

\subsection{A Lundberg bound for $\Psi^Q $}

As in the classical one-dimensional setting, we expect exponential decay of the ruin probability of sums of agents also in the network setting, provided that the claim size distributions are light-tailed. This is shown in the following theorem.
 Note that a similar result for ruin probabilities of sums of components of a multivariate risk process, but without network structure, is derived in \cite[Ch.~XIII, Proposition 9.3]{AsAl}.\\

In order to find an adjustment coefficient which is independent of the specific realisation of the network, let  $W^i$ be deterministic constants such that for all $j=1, \ldots, d$, 
\begin{equation}\label{wW}
0 \leq W_j^i \le W^i\le 1.  
\end{equation}

\begin{theorem} \label{prop:lundbergbound}{\em [Network Lundberg bound for component sums]}\\
	Let $Q\subseteq\{1,\ldots, q\}$ be a set of agents and assume that for all $j\in \{1,\ldots,d\}$ the cumulant generating functions $\varphi_{j}(t):=\log \E e^{t V_j(1)}$ exist in some neighbourhood of zero. Then for fixed $\bfa\in\A$, 
	\begin{equation} \label{lundbergreal}
	\P\Big(\sum_{i\in Q} (R^i(t)-u^i)\geq 0 \; \text{for some } t\geq 0 \mid A=\bfa\Big) \leq \mathds{1}\{\deg(Q)>0\} e^{-\kappa(\bfa) \sum_{i \in Q}u^i},\, u\in[0,\infty)^q, \sum_{i\in Q} u^i>0,
	\end{equation}
	with 
	$$\kappa(\bfa) =\sup\Big\{r>0: \sum_{j=1}^d \varphi_{j}\Big(r\sum_{i\in Q} a^i_j\Big)\leq 0 \Big\}.$$
	In particular, if for all  $j=1,\dots,d$ an adjustment coefficient $\kappa_j\in(0,\infty)$ satisfying \eqref{lucoeff} exists, then 
	$$\Psi^Q (u) \leq  \P(\deg(Q)> 0) e^{- \kappa \sum_{i \in Q}u^i}, \quad u\in[0,\infty)^q, \sum_{i\in Q} u^i>0,$$
	where 
	\begin{equation}
	\label{kappaglobal}
	\kappa= \frac{\min\{\kappa_1, \ldots, \kappa_d\}}{\sum_{i\in Q} W^i}.\end{equation}
\end{theorem}

Note that the general bound in Theorem~\ref{prop:lundbergbound} is optimal only in the case that all agents in $Q$ are only connected to the objects with the heaviest tail in the claim size distribution. For a given network structure \eqref{lundbergreal} shows that a Lundberg bound can exist even if some of the claim sizes are heavy tailed in the sense that \eqref{lucoeff} does not hold for all objects $j$.

Also note that, similar to Remark \ref{remnonetwork}, for a deterministic and fixed network structure an application of Theorem~\ref{prop:lundbergbound} on a single agent with positive degree yields the classical Lundberg bound \eqref{classicLundbergbound}.

\section{The exponential system} \label{s31} 

For the one-dimensional ruin model the exponential distribution and mixtures thereof are the only claim size models which allow for an explicit solution of \eqref{PK}. Hence, it is not surprising that exponential claim size distributions also play a prominent role in the network model. However, an explicit solution also depends on the network itself. In what follows we work with an {\em exponential system}, which is characterized by the proportional weights as in \eqref{AlaQ}, identical Poisson intensities $\lambda_j=:\lambda$ and exponential claim sizes with means $\mu_j$.
For this exponential system we obtain an explicit expression for \eqref{thm:ruinprob}.

 \begin{theorem} \label{expjumpQ} {\em [Ruin probability for component sums in the exponential system]}\\
Let $Q\subseteq\{1,\dots,q\}$ be a set of agents and assume the exponential system as defined above. 
	Then the ruin probability of the sum of all agents in $Q$ is given by
	\begin{equation}\label{ruinspecialQ}
	\Psi^Q (u) = \P( P^Q<1)  \E \Big[ P^Q  e^{- \frac{ 1 -  P^Q}{r^Q} \sum_{i\in Q} u^i }    \Big |   P^Q<1 \Big] 
	 +   \P( P^Q\geq 1),\quad u\in [0,\infty)^q, \sum_{i\in Q} u^i > 0,
	\end{equation}
	and, regardless of the claim size distribution, 
	\begin{align} \label{networkrhoerlangQ} 
	P^Q  = \la \frac{\sum_{j=1}^d \bone\{Q\sim j\}}{ \sum_{j=1}^d \bone\{Q\sim j\} c_j/\mu_j}.
	\end{align} 
\end{theorem}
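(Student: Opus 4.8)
The plan is to specialise the general network Pollaczek--Khintchine formula of Theorem~\ref{prop_psisum} to the exponential system, exploiting the fact that the weight choice \eqref{AlaQ} is engineered precisely so that the object-dependent jump scales cancel. First I would compute the aggregated weight $\sum_{i\in Q} A^i_j$. Since in \eqref{AlaQ} the weight $W^i_j=W^Q_j$ does not depend on $i$, we have $A^i_j=\bone\{i\sim j\}W^Q_j$ for $i\in Q$, and summing over $i\in Q$ the factor $\sum_{i\in Q}\bone\{i\sim j\}$ in the numerator cancels the factor $\sum_{k\in Q}\bone\{k\sim j\}$ in the denominator of \eqref{AlaQ} whenever $\bone\{Q\sim j\}=1$, leaving the clean identity
$$\sum_{i\in Q} A^i_j = \bone\{Q\sim j\}\,\frac{r^Q}{\mu_j}.$$
Substituting this into the definition \eqref{networkrho} of $P^Q$, together with $\lambda_j\equiv\lambda$, turns the numerator into $\la r^Q\sum_{j}\bone\{Q\sim j\}$ and the denominator into $r^Q\sum_j \bone\{Q\sim j\}c_j/\mu_j$; cancelling $r^Q$ yields exactly the expression \eqref{networkrhoerlangQ}.

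The decisive step is the evaluation of the integrated tail $F_I^Q$ from \eqref{FiQ}. For exponential jumps $\ov F_j(z)=e^{-z/\mu_j}$, the rescaled tail appearing in \eqref{FiQ} becomes, for every object $j$ with $\bone\{Q\sim j\}=1$,
$$\ov F_j\Big(\tfrac{y}{\sum_{i\in Q}A^i_j}\Big) = \exp\Big(-\frac{y}{\mu_j\, r^Q/\mu_j}\Big) = e^{-y/r^Q},$$
so that the $\mu_j$ cancels and the exponential scale is the \emph{same} $r^Q$ for all connected objects. Integrating, using $\lambda_j\equiv\lambda$, and dividing by the normalising constant $\sum_j(\sum_{i\in Q}A^i_j)\lambda_j\mu_j=\la r^Q\sum_j\bone\{Q\sim j\}$ computed above, the $j$-sum collapses and I obtain $F_I^Q(x)=1-e^{-x/r^Q}$; that is, $F_I^Q$ is itself exponential with mean $r^Q$, depending on the network realisation only through the set of connected objects.

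With $F_I^Q$ exponential, the conditional sum inside the expectation in \eqref{ruinsumpartial} is precisely the classical compound-geometric tail of the Pollaczek--Khintchine formula \eqref{PK} with parameter $P^Q$ and integrated-tail mean $r^Q$. Hence, for any fixed realisation of the network with $\deg(Q)>0$ and $P^Q<1$, the inner expression $(1-P^Q)\sum_{n\ge1}(P^Q)^n\ov{(F_I^Q)^{n\ast}}(\sum_{i\in Q}u^i)$ evaluates, by the exponential special case \eqref{exporuin} applied with ruin parameter $P^Q$ and mean $r^Q$ in place of $\rho$ and $\mu$, to $P^Q e^{-(1-P^Q)(\sum_{i\in Q}u^i)/r^Q}$. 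Taking the conditional expectation given $P^Q<1$ and adding the $\{P^Q\ge1\}$ contribution from \eqref{ruinsumpartial} then reproduces \eqref{ruinspecialQ}. The only point demanding care is the double cancellation of the two preceding paragraphs --- verifying that the inverse-$\mu_j$ weighting of the exponential system makes both $P^Q$ and $F_I^Q$ depend on the objects only through the indicators $\bone\{Q\sim j\}$ --- after which the statement follows directly from the one-dimensional exponential formula.
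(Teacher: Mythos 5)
Your proposal is correct and follows essentially the same route as the paper's proof: both specialise Theorem~\ref{prop_psisum} to the exponential system, use the cancellation $\sum_{i\in Q}A_j^i=\bone\{Q\sim j\}\,r^Q/\mu_j$ to show that $F_I^Q$ is exponential with mean $r^Q$ and that $P^Q$ reduces to \eqref{networkrhoerlangQ}. The only cosmetic difference is that the paper evaluates the resulting compound-geometric Erlang series explicitly (interchanging sum and integral), whereas you shortcut this by recognising the series as the classical exponential formula \eqref{exporuin} with $(\rho,\mu)$ replaced by $(P^Q,r^Q)$, which is a valid and equivalent step.
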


In contrast to Theorem \ref{prop_psisum}, in this special case
 the integrated tail distribution from \eqref{FiQ} is deterministic and exponential, 
	$$F^Q_I(x) =  1-e^{-x/r^Q}, \quad x\ge 0.$$

Note that Eq. \eqref{ruinspecialQ} can be  
		abbreviated as
		\begin{equation} \label{ruinasexpoff}
		\Psi^Q(u) = \E [ f(P^Q)] 
		\end{equation}
		with the function $f$ given as 
		$$f(\rho) = \bone \{\rho<1\} \rho  e^{- \sum_{i \in Q} u^i ( 1 -  \rho)/r^Q }  		+  \bone \{\rho \geq 1\}.
		$$

We may again extract the ruin probability for a single agent in the network as follows.

\bexam[Ruin probability for a single agent]\\
	Let $i\in\{1,\dots,q\}$ and assume that the conditions of Theorem~\ref{expjumpQ} hold with $Q=\{i\}$. 
	Then the ruin probability of agent $i$ for $u^i> 0$ is given by
	\begin{equation}\label{ruinforspecialweights}
	\Psi^i(u^i) = \P( P^i\geq 1) + \P( P^i<1) \E \left[ P^i  e^{- u^i( 1 -  P^i)/r^i }    \Big |   P^i<1 \right],
	\end{equation}
	where  
	\begin{align*}
	P^i 
	= \lambda \sum_{j=1}^d  \frac{ \bone \{ i \sim j\}}{\frac{c_j}{\mu_j} + \sum_{k \ne j} \bone \{i \sim k\} \frac{c_k}{\mu_k}}.
	\end{align*} 
	 The argument in the expectation in \eqref{ruinforspecialweights}  coincides with \eqref{exporuin} with random $\rho$.\\	 
For the special situation of equal \PK\ parameters as in Example~\ref{ex-3.6} we obtain the single agent's ruin probability as
\begin{align*}
\Psi^i(u^i)= \left(\bone \{\rho<1\} \rho  e^{- u^i( 1 -  \rho)/r^i } 	+  \bone \{\rho \geq 1\}\right) \P(\deg(i) >  0),\quad u^i\ge 0.
\end{align*}
\eexam

\section{The bipartite network with conditionally independent edges} \label{s42erdos}

Throughout this section we assume that the edge indicators in the bipartite network are conditionally independent, given the random variable $\Theta$, and that for the realisation $\Theta=\theta$ we have $\P(i\sim j)=p_{i,j}(\theta)$.
In this model, for $Q\subseteq\{1,\dots,q\}$, also the degrees for different $i \in Q$ are conditionally independent; 
in particular, 
\begin{equation*}\label{mixedsbm} 
\P (\deg(Q) = 0\mid \Theta=\theta) = \prod_{i \in Q} \prod_{j =1}^d (1 - p_{i,j}(\theta)).
\end{equation*} 
For fixed $\Theta = \theta$ this model is a prominent network model (an inhomogeneous random graph, cf. \cite{Bollobas}),
 and we present results for the network ruin parameter as well as the network ruin probability in several situations.

If $\rho_j=\rho$ for $j=1,\ldots, d$, then from Example~\ref{ex-3.6} we know that the ruin parameter $P^Q = \rho \bone (\deg Q > 0)$.
 In general, calculating $P^Q$ and functions thereof as for example in \eqref{ruinasexpoff} is not easy. 
For sparse networks we therefore give a Poisson approximation for $P^Q$. Here sparseness refers to the sum
of the squared edge probabilities being small.

\subsection{Poisson approximation of $P^Q$}\label{s42}

The results in this subsection are based on the following  proposition, which follows from \cite[Thm. 10.A]{BHJ} by conditioning; the proof is omitted. 

	\begin{proposition} \label{poissoncor} 
		Assume we are given a bipartite network such that the edge indicators $\bone \{i \sim j\}$ for $i=1, \ldots, q$ and $j=1, \ldots, d$ are conditionally independent given the value of $\Theta \in [0,1]$. 
		For each $\theta \in [0,1]$ let $Z_{i,j}(\theta)\sim {\rm Poisson}(p_{i,j} (\theta))$ be independent Poisson variables  for $i=1, \ldots, q$ and $j=1, \ldots, d$. 
		Then for any $g:{\mathbb{Z}}^{qd} \rightarrow [0,1]$, 
		\begin{align}\label{poissonbound} 
		\big|  \E [ g( \{  \bone \{i \sim j\},  i=1, \ldots, q,  j=1, \ldots, d \})]  - \E [ g(Z_{1,1}(\Theta), \ldots, Z_{q,d}(\Theta))]  \big| & \le {R(\Theta)} \\ &:=\sum_{i=1}^{q} \sum_{j=1}^d   \E [ p_{i,j}(\Theta)^2].\nonumber
		\end{align} 
	\end{proposition}
	
If agents pick objects with probability roughly proportional to the number of objects, so that for some fixed $\alpha > 0$, $p_{i,j} (\theta) \sim \alpha d^{-1}$  for all $\theta$ as $q/d \rightarrow 0$, then the bound \eqref{poissonbound} tends to 0 if $q/d \rightarrow 0$.

\begin{proposition}\label{Poihomo}{\em [Homogeneous weights]}\\
Let $Q\subseteq\{1, \ldots, q\}$ be a set of agents and assume  homogeneous weights as in \eqref{homo}.
For each $\theta \in [0,1]$ set 
		\begin{equation} \label{poissonPigen}
Z^Q (\theta) = 	\sum_{j=1}^d \sum_{i \in Q}\frac{ \lambda_j  \mu_j  Z_{i,j}(\theta)  }{ c_j (1+Z^{(i)}_j(\theta)) + \sum_{\ell \ne j} \sum_{s \in Q } Z_{s, \ell}(\theta)  c_\ell  ( 1 + Z^{(i)} _j(\theta)+ \bar{Z}^{(i)} _j(\theta))  / (1 + Z^{(s)} _\ell(\theta)+ \bar{Z}^{(s)} _\ell(\theta)) },		
		\end{equation}
		where the Poisson variables $Z_j^{(i)}(\theta)$, $\bar{Z}_j^{(i)}(\theta)$ and $Z_{i,j}(\theta)$ are independent with means $\sum_{s\in Q, s \ne i} p_{s,j}(\theta)$, $\sum_{s\not\in Q, s \ne i} p_{s,j}(\theta)$ and $p_{i,j}(\theta)$, respectively. Let ${R(\Theta)} $ be as in  \eqref{poissonbound}. 
		Then for $g: \R \rightarrow [0,1]$, 
		\begin{equation}\label{poissonbound2} 
		| \E [g(  P^Q)] - \E [g( Z^Q(\Theta)]  |  \le \sum_{j=1}^d \rho_j {R(\Theta)}.
		\end{equation} 
\end{proposition} 

\begin{corollary}\label{qualPoi}{\em [Homogeneous weights for a single agent]}\\
If $Q = \{i\}$, then under the assumptions of Proposition  \ref{poissoncor}  the approximating Poisson-based random variable for $P^i$  simplifies to
\begin{equation} \label{poissonPi}
Z^i (\theta) :=
\sum_{j=1}^d \bone \{ i \sim j \} \frac{\rho_j}{1 + \sum_{k \ne j} \bone \{ i \sim k\} \frac{c_k (1 + \bar{Z}_j^{(i)}(\theta)) }{c_j (1 + \bar{Z}_k^{(i)} (\theta)) } },
\end{equation}
where the Poisson variables $\bar{Z}_j^{(i)}(\theta)$ are independent with means 
${\pi_{i,j} (\theta)= \sum_{s \ne i} p_{s,j}} (\theta)$.
\end{corollary}

\begin{remark} 
Based on the Poisson approximation, the Delta method can be used to approximate $\E [P^i] $ using the expressions in  \eqref{poissonPi}. 
For each $\theta \in [0,1]$ set
	$$S_{i,j} (\theta):=
	1 + \sum_{k \ne j} \bone \{ i \sim k\} \frac{c_k (1 + \bar{Z}_j^{(i)}(\theta)) }{c_j (1 + \bar{Z}_k^{(i)}(\theta))},$$
	then  by conditioning on the value of $\Theta$ and of $\bone \{ i \sim j\}$,
$$ \Big| \E [P^i]  - \sum_{j=1}^d \rho_j \E \big[p_{i,j} (\Theta) S_{i,j}^{-1} (\Theta) \big] \Big| \le  \sum_{j=1}^d \rho_j {R(\Theta)}.$$
We calculate 
	\begin{align*} 
	\E [ S_{i, j}{(\theta)} \mid \Theta = \theta] 
	&= 1 + \sum_{k \ne j} p_{i,k} (\theta)
	\left( 1 + \pi_{i,k} {(\theta)}\right) \frac{c_{k} }{c_{j}} \left( \frac{1 - \exp( - \pi_{i,k} (\theta))} {\pi_{i,k} (\theta)} \right)
	=: \beta_{i,j}(\theta),
	\end{align*}
where we used Eq.~(3.9) in \cite{CS} for the last equality. Similarly, by the independence of the Poisson variables and again using the results in \cite{CS},
	\begin{align*}
	\lefteqn{\var [ S_{i,j}(\theta)\mid \Theta = \theta] }\\
	&=&\pi_{i,j}^2 (\theta)\sum_{k \ne j }
	\left( \frac{c_{k} }{c_{j}} \right)^2 p_{i,k}(\theta)
	\left\{ {\rm Chi}(\pi_{i,k}(\theta)) + {\rm Shi} (\pi_{i,k}(\theta)) - \log (\pi_{i,k}(\theta)) -\gamma - \left[ \frac{1 - \exp( -\pi_{i,k}(\theta) )}{ \pi_{i,k} (\theta)} \right]^2\right\},
	\end{align*}
	where ${\rm Chi}(x)$ is the hyperbolic cosine integral, ${\rm Shi}(x)$ is the hyperbolic sine integral, $\log(x)$ is the natural logarithm and $\gamma$ is the Euler-Mascheroni constant.
	If each $S_{i,j}(\theta)$ has small variance $\var [ S_{i,j}(\theta)\mid\Theta = \theta] $, then the Delta method combined with Proposition \ref{poissoncor} yields $\E{P^i}$ can be approximated well by  
	$\sum_{j=1}^d \rho_j \E \left[ \frac{p_{i,j}  (\Theta)}{\beta_{i, j} (\Theta)} \right].$
	If agents pick objects with probability roughly proportional to the number of objects, so that  for some fixed $\alpha > 0$, $p_{i,j} (\theta) \sim \alpha d^{-1}$, then $\var [ S_{i,j}(\theta)\mid \Theta = \theta] \sim (q/d)^2\to 0$ if $q/d \rightarrow 0$.
\end{remark}

\begin{proposition}\label{Poiexpo}{\em [Proportional weights]}\\
Let $Q\subseteq\{1, \ldots, q\}$ be a fixed set of agents.
Assume proportional weights as in \eqref{AlaQ}. 
For each $\theta \in [0,1]$ set 
\beam\label{eq:expoPoi}
 Z^Q(\theta):= \sum_{j=1}^d  Z_{Q,j}(\theta)\frac{ 1
}{\frac{1}{\rho_j} + \sum_{\ell \ne j} Z_{Q,\ell}(\theta) \frac{1}{\rho_\ell} },
\eeam
where $Z_{Q,j}(\theta)$ are independent Poisson variables with means $1 - \prod_{i \in Q} ( 1 - p_{i,j}(\theta))$.
Let ${R(\Theta)}$ be as in  \eqref{poissonbound}. Then for any $g:\R \rightarrow [0,1]$, 
		\begin{equation}\label{poissonbound2}
		 \big|  \E [g(  P^Q)] - \E [g( Z_p^Q(\Theta) ] \big|  \le \sum_{j=1}^d \rho_j {R(\Theta)}.
		\end{equation} 
\end{proposition}

In practical applications the distribution of $\Theta$ may not be available. In such a situation a second approximation step could be used, approximating mixed Poisson variables of the type $Z(\Theta)$ by a Poisson variable $Z$ with mean $\lambda$. By \cite[Thm. 1.C]{BHJ} the approximation error is bounded so that for any $g:\R \rightarrow [0,1]$, we have 
	$
		 \big|  \E [ g(  Z(\Theta) )] - \E [ g( Z) ] \big|  \le  \min(1, \lambda^{-1}) \E [ | \Theta - \lambda|] .
		$ Corresponding error terms  would then be added to the bounds in this subsection.

\subsection{$P^Q$ and $\Psi^Q$ in the mixed Binomial network}

As in the mixed Binomial network all vertices are exchangeable, we can assume without loss of generality that $Q = \{ 1, 2, \ldots, |Q|\}$. 
Further, given $\Theta=\theta \in(0,1]$, every edge is chosen with the same probability $\theta \in(0,1]$ independently, 
the degree $\deg(i)$ of each agent $i \in Q$ follows a Binomial distribution with parameters $d$ and $\theta$. 
{Thus,} for $\Theta=1$ a.s. we obtain the complete network treated in Section~\ref{s42complete}.

In a general mixed Binomial model the value $P^Q$ can take on any positive number. To see this, consider a Bernoulli network with fixed edge probability $p>0$. Then on the set $\{\deg(Q)> 0\}$, in the limit for $p\to 0$ the set of vertices in $Q$ will have exactly one edge, and the corresponding neighbour $J$ of $Q$ is chosen uniformly at random in $\{1,\ldots, d\}$. Hence for $p \to 0$ we approach a single edge network such that 
$$\lim_{p\to 0} \P(P^Q\leq x\mid \deg(Q)> 0)= \P\left(\frac{\lambda_J\mu_J}{c_J}\leq x\right) = \frac{\# \{j: \rho_j \leq x \} }{d},$$ 
where $J$ is uniformly distributed on $\{1,\ldots, d\}$. This is also illustrated in Figure \ref{bernoullimeanvar}, which shows the varying balancing effect of the network on $P^{i}$ when the proportion of objects with high and low ruin parameter is changed.

 \begin{figure}[t] 
	\includegraphics[width=0.5\textwidth]{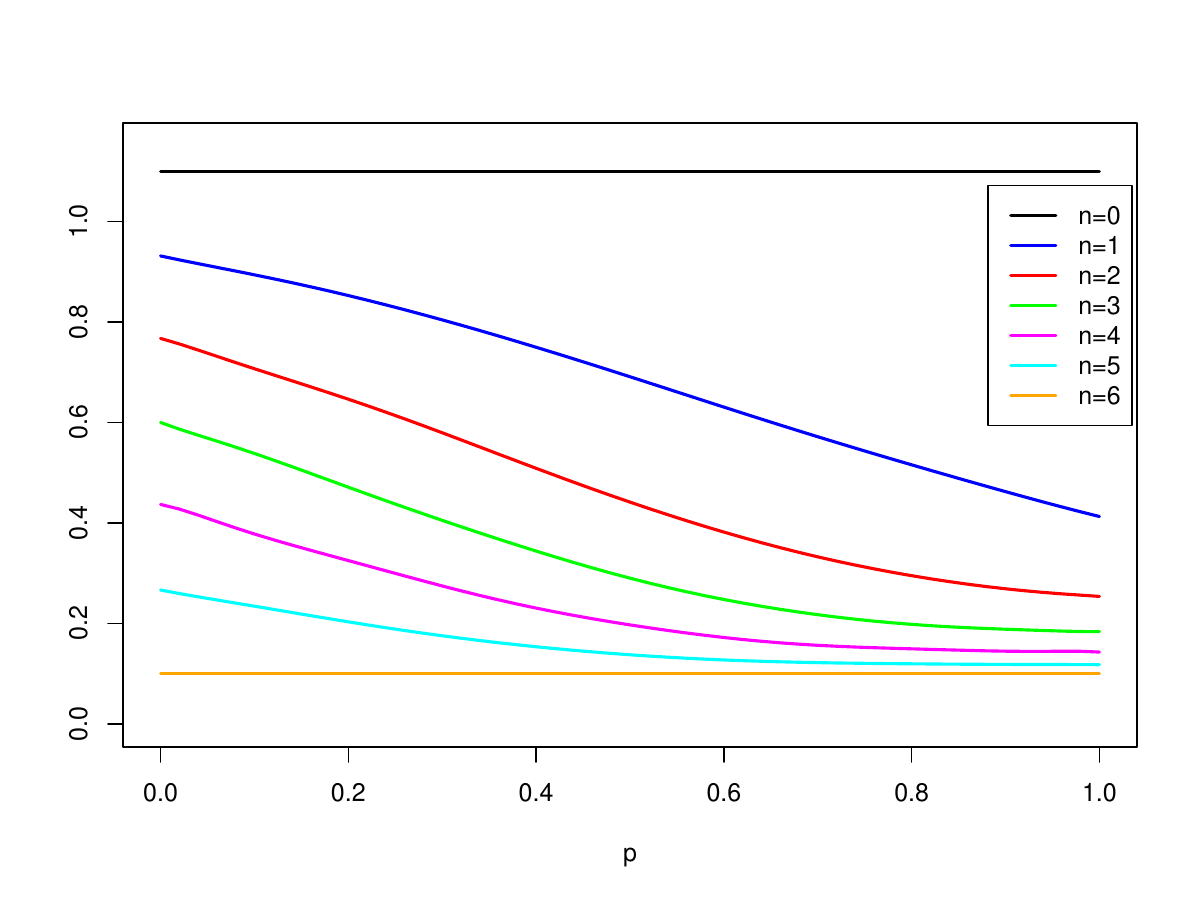}	
	\includegraphics[width=0.5\textwidth]{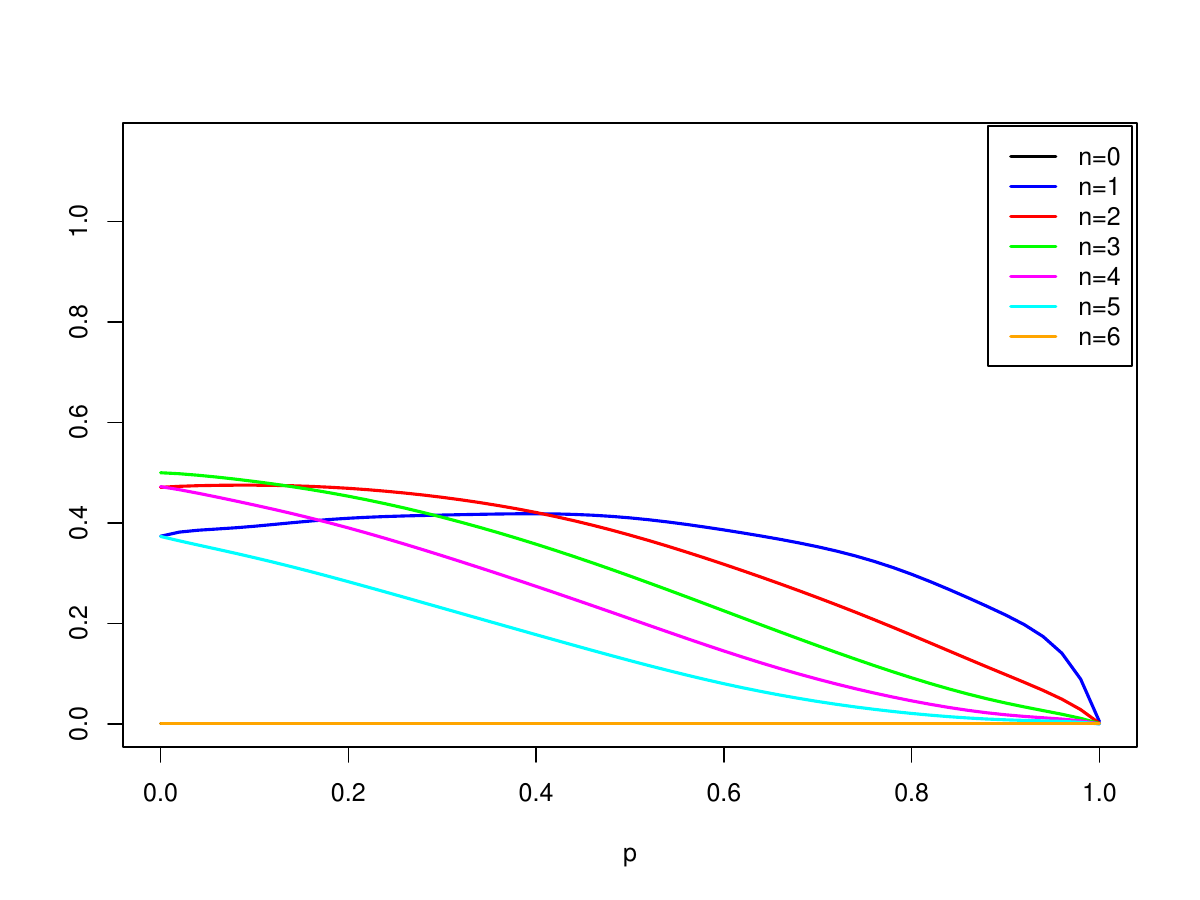}
	\caption{$\E[P^1|\deg(1)> 0]$ (left) and $\sqrt{\var(P^1|\deg(1) > 0)}$ (right) as functions of $p$ in a homogeneous Bernoulli model. Here, $\rho_j=1.1$, $j=1,\ldots,d-n$ and $\rho_j=0.1$, $j=d-n,\ldots,d$; i.e., the proportion of objects with high and low ruin parameter is changed with $n$. Further $\lambda_j\mu_j=0.5$, $j=1,\ldots,d$, and $d=q=6$. 
		Observe that for large $p$ the influence of the proportion on the expected value becomes smaller, because of many connections and a resulting high balancing effect. Still, the behaviour of the standard deviation depends heavily on the proportion as long as there exist two different ruin parameters in the system.}
	\label{bernoullimeanvar} 	
\end{figure}

If in the mixed Binomial model $\rho_j=\rho$ for $j=1,\dots,d$ we know from Example~\ref{ex-3.6} that $P^Q=\rho \bone (\deg(Q)>0)$ 
and as $\P (\deg Q = 0\mid\Theta=\theta) =  (1-\theta)^{|Q|d}$, we have 
 $\E P^Q <1$ if and only if 
 $\rho < \E[(1- (1-\Theta)^{|Q|d})^{-1}]$.
If the distribution of $\Theta$ allows for interchanging expectation and the limit $d \to \infty$, then
 for $d \to \infty$ we recover  the classical condition $\rho < 1$.

In the following example, we present a family of deterministic weights, such that the randomness of $P^Q$ only depends on the random connections of the agents to objects. 
		
\bexam[Deterministic weights] \\\label{ex-mbinom}
Let $W^i_j=\frac{r}{c_j}$ independent of $i$ with $r> 0$ independent of $i$ and $j$, such that \eqref{bedweights} holds. Then
\begin{align*}
P^Q 
&= 	\sum_{j=1}^d \sum_{i \in Q} \bone \{i \sim j\} 
\frac{\rho_j}{\sum_{i \in Q} \bone \{i \sim j\} +\sum_{k\neq j}  \sum_{i \in Q} \bone \{i \sim k\}  }.
		\end{align*}
		For $Q=\{i\}$ in the Bernoulli network with deterministic parameter $p\in(0,1]$, 
applying Theorem~1 of \cite{CS} on
$X^i=\sum_{k\neq j}  \bone \{i \sim k\} $ 
we find 
\begin{align*}\E \left[ \frac1{1+X^i}\right] &= \int_0^1 \E[u^{X_i}]  du= \int_0^1 \prod_{k\neq j} \E[u^{\bone\{i\sim k\}}]du  = \int_0^1 \prod_{k\neq j} (p u + (1-p)) du\\
& = \int_0^1  (p u + (1-p))^{d-1} du = \frac{1}{pd} (1-(1-p)^{d})
 \end{align*}
 and hence 
\begin{align*}
\E [P^i] &=  \sum_{j=1}^d \E\left[ \bone \{i \sim j\} \rho_j \frac1{1+X^i}\right]
= p \sum_{j=1}^d  \rho_j  \E \left[ \frac1{1+X^i}\right]  
= (1-( 1-p)^d)\frac{1 }{d}  \sum_{j=1}^d  \rho_j. 
\end{align*}
Thus the mean ruin parameter is independent of $i$ and equals the arithmetic mean of the ruin parameters of all objects multiplied by $\P(\deg(i)\neq 0)=1-(1-p)^d$.\\
Similarly, for $Q=\{i\}$ in a mixed Binomial network with random parameter $\Theta$, 
$$\E [P^i] 
= \frac{1 }{d}  \sum_{j=1}^d  \rho_j  \E \left(1-( 1-\Theta)^d \right).$$ 
In particular if $\Theta$ follows a Beta$(\alpha,\beta)$ distribution with density
$B(\alpha,\beta)^{-1} x^{\alpha-1} (1-x^{\beta-1}) \bone_{x\in[0,1]}$, 
\begin{align*}
\E[P^i] 
&=   \Big( 1- \frac{B(\alpha, \beta+d)}{B(\alpha,\beta)} \Big) \frac{1 }{d} \sum_{j=1}^d  \rho_j = \Big( 1- \frac{\beta (\beta+1) \cdots (\beta+d)}{(\alpha+\beta)(\alpha+\beta+1) \cdots (\alpha+\beta+d)}\Big) \frac{1}{d} \sum_{j=1}^d  \rho_j
\end{align*}
and again the expected ruin parameter is independent of $i$ and proportional to the arithmetic mean of the ruin parameters of all objects.\\
In the special case $\alpha=\beta=1$ where $\Theta$ follows a uniform distribution on $[0,1]$ the above simplifies to
$$\E[P^i]=\Big( 1- \frac{(d+1)!}{(d+2)!}\Big) \frac{1}{d} \sum_{j=1}^d  \rho_j=\frac{d+1}{(d+2)} \frac1{d} \sum_{j=1}^d  \rho_j,$$
such that  for $d\to\infty$ the expected ruin parameter converges from below to the arithmetic mean of the ruin parameters of all objects.
  \eexam
 
\subsection{$P^Q$  and $\Psi^Q$ in the complete network}\label{s42complete}

The complete network is particularly easy to treat. Here the network ruin parameter $P^Q$ from \eqref{networkrho} equals 
$$P^Q= \frac{\sum_{j=1}^d (\sum_{i \in Q} W^i_j)  \lambda_j \mu_j}{ \sum_{j=1}^d   (\sum_{i \in Q} W^i_j) c_j}, \quad Q\subseteq \{1,\dots,q\}.$$
In particular, if $\sum_{i \in Q} W^i_j=W^Q$ does not depend on $j$, then $P^Q= (\sum_{j=1}^d \lambda_j \mu_j)/( \sum_{j=1}^d c_j)$  is deterministic and does not depend on the choice of the set $Q$. 
This holds true in particular for homogeneous weights \eqref{homo}, where every object is equally shared by all agents that connect to it such that $\deg(j)=q$ for $j=1,\ldots, d$ and thus $\sum_{i \in Q} W^i_j=\frac{|Q|}{q}$.

For a fixed set $Q\subseteq\{1,\ldots, q\}$ of agents and an exponential system with proportional weights as in Theorem \ref{expjumpQ}, a complete network implies 
$$P^Q=  \frac{d \lambda}{\sum_{j=1}^d  c_j/\mu_j } =  \Big( \frac{1}{d} \sum_{j=1}^d \frac{1}{\rho_j}\Big)^{-1}   ,\quad  i=1,\ldots, q, $$
which again is deterministic. 
If $P^Q<1$, then in the exponential system we find from \eqref{ruinspecialQ}
$$\Psi^Q(u) = P^Q e^{-\frac{1-P^Q}{r^Q} \sum_{i \in Q} u^i} $$
for $u \in [0,\infty)^q $ such that $\sum_{i \in Q} u_i > 0$, which is similar to the one-dimensional case \eqref{exporuin}.

\bexam To illustrate the effect of the random network and the weights further, assume that the underlying bipartite network is itself a mixture - a complete graph with probability $\alpha \in (0,1)$, and the mixed Binomial model considered in Example \ref{ex-mbinom} with probability $1 - \alpha$. For a single agent $i\in Q$ we then have
$$
\E[P^i] =  \frac{1}{d} \sum_{j=1}^d \rho_j \left(\alpha + (1-\alpha)  \E \big[1-( 1-\Theta)^d \big]\right)
\le  \frac{1}{d} \sum_{j=1}^d \rho_j .
$$
The expected ruin parameter will be smaller than the ruin parameter of the deterministic network unless  the random network is a complete graph almost surely. 

In general this monotone behaviour is not the case. For example, in an exponential system with proportional weights  as in Theorem \ref{expjumpQ}, the same graph mixture gives for a single agent $i$ that 
$$
\E[P^i] =  \alpha d \, \Big(  \sum_{j=1}^d \frac{1}{\rho_j}\Big)^{-1} + (1 - \alpha) \E 
\Big[  \frac{{\rm{deg}}(i)} {\sum_{j=1}^d \frac{1}{\rho_j} \bone ( i \sim j)} \Big].
$$
Which one of the two summands dominates the expectation depends on the network model and on $\alpha$. 
\eexam

\section{The joint ruin probability of a set of agents}\label{s4}

In this section we consider $\Psi_{\wedge}^Q$ as defined in \eqref{jointhit2}. Due to the far more complicated structure of the process $\min_{i\in Q} (R^i(t)-u^i)$ compared to the sum of components, we do not obtain an explicit form for $\Psi^Q_\wedge$. 
Still, we can derive a Lundberg-type bound for $\Psi^Q_\wedge$ using classical martingale techniques. 

Recall the bound $W^i$ in \eqref{wW} and write for any $Q\subseteq \{1,\ldots,q\}$ and any two vectors $a,b \in \R^q$
$$\langle a,b\rangle_Q:=\sum_{i\in Q} a_i b_i.$$

\begin{theorem}\label{thm:multlundberg}  {\em [Network Lundberg bound for joint ruin probabilities of several agents]} \\
	Let $Q\subseteq\{1,\ldots, q\}$ be a set of agents and assume that for all $j\in \{1,\ldots,d\}$ the cumulant generating functions $\varphi_{j}(t):=\log \E e^{t V_j(1)}$ exist in some neighbourhood of zero. Then for fixed $\bfa\in\A$,
\begin{equation}\label{lundbergrealmin}
\P\Big(\min_{i\in Q}(R^i(t)-u^i)\geq 0 \;\text{for some }t\geq 0 \mid A=\bfa\Big)
\leq \mathds{1}\{\min_{i\in Q} \deg(i)>0\} e^{-\langle \kappa_\wedge(\bfa,u), u\rangle_Q},
\end{equation}
for $u\in[0,\infty)^q, \sum_{i \in Q}u^i\neq 0,$ where 
$$\kappa_\wedge(\bfa,u)=\underset{\substack{r\in (0,\infty)^q:\\ \varphi_j(\sum_{i\in Q} r^i a^i_j)\leq 0}}{\arg\max} \langle r, u\rangle_Q .$$
	In particular, assume that for all objects $j=\{1,\dots,d\}$ the adjustment coefficient $\kappa_j\in(0,\infty)$ satisfying \eqref{lucoeff} exists. Then
	$$\Psi^Q_\wedge(u) 	\leq \P\big(\min_{i\in Q} \deg(i)>0\big) e^{-\langle \kappa_\wedge(u) ,u\rangle_Q}, \quad u\in[0,\infty)^q, \sum_{i \in Q}u^i\neq 0,$$
	with 
	$$\kappa_\wedge(u)= \underset{\substack{r\in (0,\infty)^q:\\ \sum_{i\in Q}  
		r^iW^i \leq \min \{\kappa_j, j=1,\ldots, d\}}}{\arg\max} \langle r,u \rangle_Q.$$
\end{theorem}

\begin{remarks}\label{compare}\rm [Comparing the bounds for $\Psi^Q(u)$  and $\Psi^Q_\wedge(u)$]\\
(i) Assume that all objects $j\in\{1,\dots,d\}$ have the same adjustment coefficient $\kappa$. Let $Q\subseteq \{1,\dots,q\}$ be a set of agents and assume for the risk reserve $u^i=U/|Q|$ for $i\in Q$ and $U=\sum_{i \in Q}u^i>0$. Then $\kappa_\wedge(u)=\kappa/\sum_{i\in Q} W^i \cdot (1,\ldots,1)$, which gives the bounds
\beao
\Psi^Q_\wedge(u) & \leq & \P(\min_{i\in Q} \deg(i)> 0 ) e^{-\kappa U /\sum_{i\in Q} W^i}, \quad \text{and}\\
\Psi^Q(u) & \le & \P(\deg(Q)> 0 ) e^{-\kappa U/\sum_{i\in Q} W^i},
\eeao
from Theorem \ref{prop:lundbergbound}.
The exponential decay is for both ruin probability bounds the same. The constant in $\Psi^Q_\wedge(u)$ is, however, in general smaller than in $\Psi^Q(u)$.\\
(ii) For $Q=\{i\}$ we have $\Psi^{i}=\Psi_\wedge^{i}$ and also the bounds obtained in Theorems \ref{prop:lundbergbound} and \ref{thm:multlundberg} coincide.
\end{remarks}

\section{Proofs}\label{proof}

Throughout, we shall denote all realisations of random quantities which are influenced by the realisations $\bfa\in \mathbb{A}$ of the network structure, by the corresponding tilded letters; e.g., $\tilde R=\tilde{R}(\bfa)$ is a specific realisation of  the process $R$ when the network $\bfa$ is fixed. 

\subsubsection*{Proof of Theorem~\ref{prop_psisum}}

	By definition of the process $(R(t))_{t\ge 0}$ we have 
	\begin{align*}
	\sum_{i\in Q} (R^i(t)-u^i) & = \sum_{j=1}^d   \Big(\sum_ {i\in Q} A^i_j \Big) V_j(t)  - \sum_ {i\in Q} u^i,\quad t\ge 0,
	\end{align*}
	such that 
	$$\Psi (u) = \P\Big( \sum_{j=1}^d   \Big(\sum_ {i\in Q} A^i_j\Big) V_j(t) \geq \sum_ {i\in Q} u^i  \text{ for some } t\geq 0\Big),\quad u\in [0,\infty)^q, \sum_{i \in Q}u^i > 0.$$
	For every realisation $\bfa=(a^i_j)$  of the network with $\deg(Q)> 0$ the process
	$( \sum_{j=1}^d ( \sum_ {i\in Q}  a^i_j) V_j(t) )_{t\ge0}$
	is a compound Poisson process with intensity, claim size distribution and drift given by 
	\begin{align*}
	\tilde{\lambda}= \sum_{j=1}^d \bone\{ Q\sim j\} \lambda_j,\quad \quad
	\tilde{F}(x)&= \frac{1}{\tilde{\lambda}} \sum_{j=1}^d \lambda_j \bone\{ Q\sim j\} F_j\Big(\frac{x}{\sum_ {i\in Q}  a^i_j}\Big),  \quad \text{and} \quad
	\tilde{c}=	\sum_{j=1}^d \Big( \sum_ {i\in Q} a^i_j\Big) c_j.
	\end{align*}
	Hence, whenever 
	$$\tilde{\rho}:=\frac{\sum_{j=1}^d (\sum_ {i\in Q}  a^i_j) \lambda_j \mu_j}{\sum_{j=1}^d ( \sum_ {i\in Q} a^i_j) c_j} <1,$$
	for any fixed realisation $\bfa=(a^i_j)$ of $A$ it holds that
	\begin{align*}
	\P\Big( \sum_{j=1}^d   \Big(\sum_ {i\in Q} a^i_j\Big) V_j(t) \geq \sum_ {i\in Q} u^i  \text{ for some } t\geq 0\Big)
	&= (1- \tilde\rho) \sum_{n=1}^\infty \tilde\rho^{n} \ov{(\tilde{F})_I^{n\ast}}(\sum_ {i\in Q} u^i),
	\end{align*}
	by the classical Pollaczek-Khintchine formula \eqref{PK}. For $\deg(Q)=0$ the ruin probability is obviously $0$. The result now follows by conditioning on the realisations of $A$ since $A$ and $V$ are independent.

\subsubsection*{Proof of Theorem \ref{prop:lundbergbound}}

	Our proof relies on standard martingale arguments which is why we will only briefly sketch it here.
	Note first that for any realisation $\bfa \in \A$ with $\deg(Q)=0$ obviously ruin cannot occur. Thus fix $\bfa\in \A$ such that $\deg(Q)\neq 0$. Then the mgf of $\sum_{i\in Q} \tilde{R}^i(t)$ can be computed as
	\begin{align*}
	\E\Big[\exp\Big(r \sum_{i\in Q} \tilde{R}^i(t)\Big)\Big]  
	&=\E\Big[\exp\Big(r\sum_{i\in Q} \Big(\sum_{j=1}^d a^i_j V_j(t)\Big)\Big)\Big] = \prod_{j=1}^d \E\Big[r \Big(\sum_{i\in Q}  a^i_j\Big) V_j(t) \Big] 
	=\exp\Big(t \sum_{j=1}^d \varphi_j\Big(r\sum_{i\in Q} a^i_j\Big) \Big) \nonumber\\
	&=: \exp( t g_\bfa(r)), 
	\end{align*}
	for any $r\geq 0$ such that the occuring terms are finite. This yields by standard arguments that for all $\mathbf{u}:=\sum_{i\in Q} u^i>0$
	$$M_{\bfa,u}(t):= \frac{\exp\Big(r\Big(\sum_{i\in Q} \tilde{R}^i(t) -\mathbf{u}\Big)\Big)}{\exp(tg_\bfa(r))},\quad t\ge 0,$$ 
	is a martingale with respect to the natural filtration of $(V(t))_{t\ge0}$. Proceeding as in the classical proof of the Lundberg bound (see e.g. Proposition~3.1 of \cite{AsAl}) we obtain
	$$\P\Big(\sum_{i\in Q} (R^i(t)-u^i)\geq 0 \; \text{for some } t\geq 0 \mid A=\bfa\Big) \leq e^{-r\mathbf{u}} \sup_{t\geq 0} \exp(tg_{\bfa}(r)),$$
	which yields \eqref{lundbergreal} with
	$\kappa(\bfa)=\sup\{r>0: g_\bfa(r)\leq 0\}.$\\
	To obtain the global bound we have to choose $\kappa$ as large as possible such that 
	$g_\bfa(\kappa)\leq 0$ for all $\bfa\in \A$. This is clearly satisfied if $\varphi_j(\kappa \sum_{i\in Q} a^i_j)\leq 0$ for all $j$ and all $\bfa\in \A$ which leads to the form given in \eqref{kappaglobal}. Thus
	$$\P\Big(\sum_{i\in Q} (R^i(t)-u^i)\geq 0 \; \text{for some } t\geq 0\mid A=\bfa\Big) \leq e^{-\kappa \mathbf{u}},$$
	for all $\bfa\in \A$, and with 
	$$\Psi(u)=\int_\A \P\Big(\sum_{i\in Q} (R^i(t)-u^i)\geq 0 \; \text{for some } t\geq 0\mid A=\bfa\Big) d\P_A(\bfa)$$
	we obtain the result.

\subsubsection*{Proof of  Theorem~\ref{expjumpQ}}

We calculate the random integrated tail distribution $F_I^Q$ as in  \eqref{FiQ},  
\begin{align*}
F^Q_I(x)
&= \Big(\sum_{j=1}^d \Big(\sum_{i\in Q}  \bone\{i \sim j\} W_j^Q\Big)\mu_j  \Big)^{-1} \sum_{j=1}^d \bone \{Q\sim j\}  \int_0^x \overline{F_j} \Big(\frac{y}{\sum_{i \in Q} \bone\{i \sim j\} W_j^Q}\Big) dy\\
&= \Big(\sum_{j=1}^d \bone\{Q\sim j\} r^Q\Big)^{-1} \sum_{j=1}^d \bone \{Q\sim j\}   \int_0^x \overline{F_j} \Big(\frac{y\mu_j}{ \bone\{Q\sim j\}r^Q}\Big) dy\\
&= \Big(\sum_{j=1}^d \bone \{Q \sim j\}\Big)^{-1} \sum_{j=1}^d \bone \{Q\sim j\}   \big(1-e^{-x/r^Q}\big)\\
&= 1-e^{-x/r^Q}, \quad x\ge 0,
\end{align*}
which is  deterministic; we recognise it as the distribution function of the  exponential distribution  with mean $r^Q$. 
Hence $(F^Q_I)^{n\ast}$ is an Erlang distribution function with density 
$$g^Q_n(x) = \frac{x^{n-1}}{(n-1)!(r^Q)^n}  e^{-x/r^Q},\quad x\ge 0.$$  
Moreover, due to the assumptions on the network, $P^Q$ in  \eqref{networkrho} equals
\eqref{networkrhoerlangQ}.
From \eqref{ruinsumpartial} we obtain for $u\in [0,\infty)^q$ such that $\sum_{i\in Q} u^i > 0$,
\begin{equation} \label{intermediatetailQ}  
\Psi^Q (u) 
= \P(P^Q\ge 1) +  \P(P^Q<1) 
 \E \Big[(1- P^Q) 
  \sum_{n=1}^\infty (P^Q)^{n}  \frac{1}{(n-1)!(r^Q)^n } 
\int_{\sum_{i\in Q} u^i}^\infty t^{n-1}e^{-t/r^Q} dt   \Mid   P^Q<1 \Big].
\end{equation}
Now we calculate that 
\begin{align*}
\sum_{n=1}^\infty  \left(\frac{P^Q}{r^Q}\right)^{n}    \frac{ t^{n-1}}{(n-1)! } 
&= \frac{P^Q}{r^Q}   e^{  P^Q t/r^Q},\quad t\ge 0,
\end{align*}
and 
\begin{align*}
\int_{\sum_{i\in Q} u^i}^\infty  \sum_{n=1}^\infty \left(\frac{P^Q}{r^Q}\right)^{n}
\frac{t^{n-1}}{(n-1)!}  e^{-t/r^Q} dt  
& =  \frac{P^Q}{r^Q}\int_{\sum_{i\in Q} u^i}^\infty  e^{ -t( 1 -  P^Q)/r^Q }  dt
= \frac{P^Q}{ 1 - P^Q}  e^{- \sum_{i\in Q} u^i ( 1 -  P^Q)/r^Q}. 
\end{align*}
Using this expression in \eqref{intermediatetailQ} gives the assertion. 

\subsubsection*{Proof of Proposition \ref{Poihomo}}

	We write 
	\begin{equation*}
	\deg(j) = \bone\{ i \sim j\} + \deg^{(i)}(j), \quad \text{and} \quad \deg_Q(j) = \sum_{i\in Q} \bone \{i\sim j\} = \bone \{i\sim j\}+ \deg_Q^{(i)}(j),
	\end{equation*}
	so that $\deg^{(i)}(j) $ and $\bone\{ i \sim j\} $ are independent, as well as $\deg^{(i)}_Q(j) $ and $\bone\{ i \sim j\} $. 
	Recall from Remark \ref{deg0notnecessary} that in 
	\eqref{networkrho} the indicator  $\bone \{ \deg(Q) >  0\}$ can be omitted. 
	Thus, 
	\begin{align*}
	P^Q 
	&=  \frac{ \sum_{j=1}^d \sum_{i \in Q}\bone \{i \sim j\} \lambda_j \mu_j / \deg(j)   }{\sum_{\ell=1}^d \sum_{i \in Q}\bone \{i \sim \ell\} c_\ell / \deg(\ell)  }\nonumber \\
	&=  \sum_{j=1}^d \sum_{i \in Q}\bone \{i \sim j\}  \frac{\lambda_j \mu_j}{ \deg(j) \sum_{\ell=1}^d \sum_{s \in Q}\bone \{s \sim \ell\} c_\ell/\deg(\ell)  }
	\nonumber \\
	&=  \sum_{j=1}^d \sum_{i \in Q}\bone \{i \sim j\}  \frac{\lambda_j \mu_j}{ c_j (1+ \deg^{(i)}_Q(j)) +  \sum_{\ell \ne j} \sum_{s\in Q}\bone \{s \sim \ell\} c_\ell  ( 1 + \deg^{(i)}(j)) /( 1 + \deg^{(s)}(\ell)) }.  
	\end{align*} 
	 Note that for objects $j$ and $\ell$, $\deg^{(i)}(j)$ and $\deg^{(s)}(\ell)$ are independent for $\ell \ne j \in \{1,\ldots, d\}$ and $i\neq  s \in \{ 1, \ldots, q\}$.  
Thus $P^Q$ is expressed as a function of the edge indicators with the dependence disentangled. While $P^Q$ is a non-negative function of the edge indicators, it does not quite fit into the framework of Proposition \ref{poissoncor} because it may not be bounded by 1. Instead, the deterministic expression $\sum_{j=1}^d \rho_j$ serves as upper bound. Using the function 
$ g( \{  \bone \{i \sim j\},  i=1, \ldots, q,  j=1, \ldots, d \}) = h \left(  (\sum_{j=1}^d \rho_j)^{-1} P^Q \right) $ 
with $h \in [0,1]$ makes  Proposition \ref{poissoncor} applicable. Equivalently, instead of transforming $g$  the bound \eqref{poissonbound} can be multiplied by $\sum_{j=1}^d \rho_j$.

	We apply Proposition~\ref{poissoncor} and use that the sum of independent Poisson variables 
	 is again Poisson, so that we approximate 
	$\deg^{(i)}_Q (j)  $ by $ Z^{(i)}_j$, and $ \deg^{(i)} (j)  $ by $ Z^{(i)}_j + \bar{Z}^{(i)}_j. $
	Moreover,  the Poisson variables $Z^{(i)}_{j}$ and $Z^{(i)}_{\ell}$ are independent for $\ell \ne j \in \{1,\ldots, d\}$ and independent of $\bone\{i\sim j\}$. The same is true for $\bar{Z}^{(i)}_{j}$,  $\bar{Z}^{(i)}_{\ell}$ and $\{Z_{i,j}, i=1, \ldots, q, j=1, \ldots, d\}$. 
The assertion now follows from Proposition \ref{poissoncor}.

\subsubsection*{Proof of Proposition \ref{Poiexpo}}

{For} proportional weights as in \eqref{AlaQ},   using \eqref{networkrho2} for \eqref{networkrhoerlangQ} we have, regardless of the claim size size distribution, 
	\begin{align*}
	P^Q = & \sum_{j=1}^d\bone (Q \sim j) \frac{ \lambda
	}{\frac{c_j}{\mu_j} + \sum_{\ell \ne j} \bone (Q \sim \ell) \frac{c_\ell}{\mu_\ell} }
	= \sum_{j=1}^d\bone (Q \sim j) \frac{1
	}{\frac{1}{\rho_j } + \sum_{\ell \ne j} \bone (Q \sim \ell) \frac{1}{\rho_\ell } }.
	\end{align*}
	{Thus $P^Q$ is a non-negative function of the edge indicators, and it can be bounded above by $\sum_{j=1} \rho_j$.} 
	Now for $j =1, \ldots, d$ {and $\theta \in [0,1]$} let
	$$\pi_{Q,j} {(\theta)}  = \P (Q \sim j {\mid \Theta = \theta }) = 1 - \prod_{i \in Q} ( 1 - p_{i,j}{(\theta)}).$$
	Then the Poisson approximation of Proposition~\ref{poissoncor} gives {the assertion}.

\subsubsection*{Proof of Theorem \ref{thm:multlundberg}}

For notational simplicity the following proof is only given for $Q=\{1,\ldots, q\}$. The general case can easily be obtained by cutting down the network to a subset of agents.
As in the proof of Theorem \ref{prop:lundbergbound} we will follow a standard martingale approach.
Note first that, if $\deg(i)=0$ for one or more agents, then the joint ruin probability is zero, since at least one component of the process $R$ is constant. Thus fix any realisation $\bfa$ of $A$ such that $\deg(i)> 0$ for all $i$.
The mgf of $\bfa V(t)$ can be computed as
 \begin{align*}
\E\Big[\exp\big(\langle r, \bfa V(t)\rangle\big)\Big]
&=\E\Big[\exp\Big(\sum_{i=1}^q r_i \Big(\sum_{j=1}^d a^i_j V_j(t)\Big)\Big)\Big] = \prod_{j=1}^d \E\Big[\sum_{i=1}^q r_i a^i_j V_j(t) \Big] 
=\exp\Big(t \sum_{j=1}^d \varphi_{j}\Big(\sum_{i=1}^q r_i a^i_j\Big) \Big) \nonumber\\
&=: \exp( t h_\bfa(r)), 
\end{align*}
for any $r=(r_1,\ldots, r_q)\in(0,\infty)^q$ such that the occuring terms are finite. Hence, for these $r$
$$M_{\bfa, u}(t)=\frac{\exp(\langle r, \bfa V(t)-u\rangle )}{\exp(th_\bfa (r))}, \quad t\geq 0,$$
is a martingale with respect to the natural filtration of $(V(t))_{t\ge0}$. 
Proceeding via Doob's optional stopping theorem as in the classical proof of the (one-dimensional) Lundberg bound (see e.g. Proposition~3.1 of \cite{AsAl}) we obtain for any $r\in(0,\infty)^q$
\begin{equation*}
\P\big(\bfa V(t)-u\in [0,\infty)^q \;\text{for some }t\geq 0\big)
\leq e^{-\langle r, u\rangle} \sup_{t\geq 0} e^{t h_\bfa(r)},
\end{equation*}
which proves \eqref{lundbergrealmin}. \\
For the global bound note that for any $r$ such that  $\sum_{i=1}^q r^i W^i \leq \kappa_j$ we have 
$\sum_{i=1}^q r^i a^i_j \leq \kappa_j$ for all $j$ and hence
 $\varphi_j(\sum_{i=1}^q r^i W^i )\leq \varphi_j(\kappa_j)=0$, $j=1,\ldots, d$. Thus if $\sum_{i=1}^q r^i W^i \leq \min \{\kappa_j, j=1,\ldots,d \}$ this yields 
$h_\bfa(r) \leq 0$ and 
$$\P\big(\bfa V(t)-u\in [0,\infty)^q \;\text{for some }t\geq 0\big) \leq e^{-\langle r, u\rangle}$$
for any realisation $\bfa$, which gives the result.

\begin{small}
\bibliography{bibgesine}
\bibliographystyle{plain}
\end{small}

\end{document}